\definecolor{c4}{hsb}{0,1,0}
\newcommand{\R}{\mathbb{R}}
\newcommand{\N}{\mathbb{N}}
\newcommand{\La}{\mathbb{L}}
\newcommand{\NN}{\mathcal{N}}
\newcommand{\e}{\varepsilon}
\newcommand{\abs}[1]{\lvert#1\rvert}
\renewcommand{\leq}{\leqslant}
\renewcommand{\geq}{\geqslant}
\newtheorem*{corollary*}{Corollary}
\newtheorem{theorem}{Theorem}
\newtheorem{lemma}[theorem]{Lemma}
\newtheorem{cor}[theorem]{Corollary}
\theoremstyle{definition}
\newtheorem{example}[theorem]{Example}
\theoremstyle{remark}
\begin{document}
\title[Approximate homomorphisms on lattices]{Approximate homomorphisms on lattices}

\author[R. Badora]{Roman Badora}
\address{Institute of Mathematics, University of Silesia, Bankowa 14, 40-007 Katowice, Poland}
\email{robadora@ux2.math.us.edu.pl}

\author[T. Kochanek]{Tomasz Kochanek}
\address{Institute of Mathematics, Polish Academy of Sciences, \'Sniadeckich 8, 00-656 Warsaw, Poland\, {\rm and}\, Institute of Mathematics, University of Warsaw, Banacha~2, 02-097 Warsaw, Poland}
\email{tkoch@impan.pl}

\author[B. Przebieracz]{Barbara Przebieracz}
\address{Institute of Mathematics, University of Silesia, Bankowa 14, 40-007 Katowice, Poland}
\email{barara.przebieracz@us.edu.pl}

\subjclass[2010]{Primary 06B23, 06D99, 39B82}
\keywords{}

\begin{abstract}
We prove two results concerning an~Ulam-type stability problem for homomorphisms between lattices. One of them involves estimates by quite general error functions; the~other deals with approximate (join) homomorphisms in terms of certain systems of lattice neighborhoods. As a~corollary, we obtain a~stability result for approximately monotone functions.
\end{abstract}
\maketitle

\section{Introduction}
More than half a century ago, S.M.~Ulam \cite{U} posed the problem of finding conditions which guarantee that any nearly additive map defined, say, on a~semigroup must be close to a~truly additive map. In 1978, P.M.~Gruber \cite{G} reformulated his question by posing a~more general stability problem: ``{\it Suppose a~mathematical object satisfies a~certain property approximately. Is it then possible to approximate this object by objects, satisfying the property exactly?}''. This initiated a~broad research program on the~stability problem in theory of functional equations; for more information the~reader may consult \cite{HR}.

In this note, we deal with an~Ulam-type problem for homomorphisms of lattices. We present two results where satisfying the~homomorphism equation `approximately' is formalized either with the~aid of error functions or in terms~of abstractly understood neighborhoods in lattices. In order to justify our approach let us mention a~few known results concerning the~stability problem in lattices. 

A~pioneering paper in this context is the~one by N.J.~Kalton and J.W.~Roberts \cite{KR} which contains the following deep result (originally formulated for~algebras of sets and nearly additive set functions).
\begin{theorem}[Kalton \& Roberts \cite{KR}]\label{KR}
Let $X$  be a Boolean algebra and $f\colon X\to\R$ a~function satisfying
$$
|f(x\vee y)-f(x)-f(y)|\leq1\quad\mbox{for }x,y\in X\mbox{ with }x\wedge y=0.
$$ 
Then there exists a~map $g\colon X\to \R$ such that 
$$
g(x\vee y)=g(x)+g(y)\quad\mbox{ for }x,y\in X\mbox{ with }x\wedge y=0
$$ 
and $\abs{f(x)-g(x)}< 45$ for every $x\in X$.
\end{theorem}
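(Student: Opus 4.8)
The plan is to reduce the problem to finite Boolean algebras by a compactness argument, recast the finite case as a linear (minimax) approximation question, and then settle that question by a combinatorial construction of sufficiently ``uniform'' families of partitions. For the reduction: both the hypothesis on $f$ and the conclusion only ever involve finitely many elements of $X$ at a time, so granting the finite case one may take, for every finite subalgebra $B\subseteq X$, an additive $g_B$ with $\abs{f-g_B}<45$ on $B$; viewing candidate functions as points of the compact product $\prod_{x\in X}[f(x)-45,f(x)+45]$ and intersecting, over all finite $B$, the closed nonempty sets ``additive on $B$'' (nonempty because $g_B$ extends by $f$ off $B$; finite intersections nonempty by passing to the generated subalgebra), one extracts a global $g$. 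So it suffices to treat $X=\mathcal P(\{1,\dots,n\})$, where an additive map is exactly $g(A)=\sum_{i\in A}c_i$ with $c_i=g(\{i\})$, and where $\abs{f(\emptyset)}\le 1$ makes the empty set harmless; the task is then to find reals $c_1,\dots,c_n$ — with a bound $K$ \emph{independent of $n$} — such that $\bigl\lvert f(A)-\sum_{i\in A}c_i\bigr\rvert\le K$ for all $A$. Replacing $f(A)$ by $f(A)-\sum_{i\in A}f(\{i\})$ (still $1$-additive, and now vanishing on singletons, hence $\abs{f(C)}\le\abs{C}-1$) we may also assume $f$ vanishes on atoms.

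A useful way to see the shape of what is needed is duality: the least possible $\sup_A\bigl\lvert f(A)-\sum_{i\in A}c_i\bigr\rvert$ is the $\ell^\infty$-distance of $f$ from the subspace of additive functions, and Hahn--Banach identifies it with
\[
\sup\Bigl\{\,\textstyle\sum_{A}\mu(A)f(A)\ :\ \sum_A\abs{\mu(A)}\le 1,\ \ \sum_{A\ni i}\mu(A)=0\ \ (i=1,\dots,n)\,\Bigr\},
\]
the last condition saying that the signed weighting $\mu$ on subsets annihilates every additive function. Splitting $\mu=\mu^+-\mu^-$, this condition says precisely that $\mu^+$ and $\mu^-$ have equal marginals on atoms, which is exactly the situation one decomposes using (fractional) partitions; so it suffices to bound $\bigl\lvert\sum_A\mu(A)f(A)\bigr\rvert$ by an absolute constant for all such $\mu$, or equivalently to exhibit one good $c$ directly.

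To do the latter I would spread the approximate-additivity defects of $f$: fix a family $\pi_1,\dots,\pi_N$ of partitions of $\{1,\dots,n\}$ with blocks of bounded size, give each element its even share $f(\mathrm{block})/\lvert\mathrm{block}\rvert$ in each $\pi_j$, and set $c_i=\frac1N\sum_j(\text{share of }i\text{ in }\pi_j)$. Since $\abs{f(C)}\le\abs{C}-1$ on small sets, each $\pi_j$ contributes a controlled error for sets it ``resolves'', and the averaged error for an arbitrary $A$ is governed by how uniformly the family resolves $A$. What is therefore required is the existence, for every $n$ and with constants not depending on $n$, of such a family — a \emph{partitioner} in the sense of Kalton--Roberts — for which \emph{every} subset is resolved well on average; this is the deep point, and I expect it to be the main obstacle. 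I would obtain it from a tower of bipartite graphs with expander/concentrator-type properties, using the probabilistic method to produce the graphs and Hall's marriage theorem to convert the resulting fractional data into honest partitions. Everything else is bookkeeping, and the constant $45$ in the statement is just the accumulated loss through the reductions above.
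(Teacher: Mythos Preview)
The paper does not prove this theorem at all: it is quoted in the Introduction as a known result of Kalton and Roberts \cite{KR}, purely for motivation, and no argument for it appears anywhere in the text. So there is nothing in this paper to compare your proposal against.

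For what it is worth, your sketch follows the broad outline of the original Kalton--Roberts argument (reduction to finite algebras by compactness, reformulation as an $\ell^\infty$-approximation problem with a duality interpretation, and the construction of a uniform family of partitions --- their ``concentrators'' --- via probabilistic/expander methods). You are right that the existence of such a partitioner is the deep point and the source of the explicit constant; everything else is indeed bookkeeping. But none of this is in the present paper, whose own results (Lemma~\ref{sep-thm}, Theorems~\ref{main} and~\ref{main2}) concern a different kind of stability and do not touch the Kalton--Roberts theorem.
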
\noindent
This result is of fundamental importance in functional analysis, especially in theory of twisted sums of quasi-Banach spaces (see, {\it e.g.} \cite{kalton}), as well as in the stability problem for vector meaures ({\it cf. }\cite{koch}). A~somehow related result, very combinatorial in its nature, was obtained by I.~Farah.
\begin{theorem}[Farah \cite{F}] 
Let $n,m\in\N$, $X=2^{\{1,2,\ldots,m\}}$ and $Y=2^{\{1,2,\ldots, n\}}$. 
Suppose that $\varphi\colon Y\to [0,\infty]$ is a submeasure, that is, $\varphi (0)=0$, $\varphi(A)\leq \varphi(A\cup B)$, for $A,B\in Y$ and $\varphi (A\cup B)\leq \varphi(A)+\varphi(B)$, for $A,B\in Y$. Moreover, we assume that $\phi$ is nonpathological, that is, it is equal to the~supremum of all measures it dominates. Let $\e>0$ and $f\colon X\to Y$ be such that
$$
\varphi(f(x\cup y)\div (f(x)\cup f(y))<\e\quad \mbox{for }x,y\in X,
$$
$$
\varphi(f(X\setminus x)\div (Y\setminus f(x)))<\e\quad\mbox{for }x\in X.
$$ 
Then there exists a~lattice homomorphism $g\colon X\to Y$ such that $\varphi(f(x)\div g(x))<521\e$ for every $x\in X$.
\end{theorem}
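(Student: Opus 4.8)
The plan is to realize the desired homomorphism in the most rigid possible form. Every map $\pi\colon\{1,\dots,n\}\to\{1,\dots,m\}\cup\{\ast\}$ (think of $\ast$ as ``unassigned'') gives rise to a lattice homomorphism $g_\pi\colon X\to Y$ by $g_\pi(A)=\pi^{-1}(A)=\{j\leq n:\pi(j)\in A\}$, since preimages commute with unions and intersections. So it suffices to produce such a $\pi$ together with an ``exceptional set'' $S\subseteq\{1,\dots,n\}$ with $\varphi(S)<521\e$ and
$$
f(A)\triangle\pi^{-1}(A)\subseteq S\qquad\text{for every }A\in X;
$$
then $\varphi\bigl(f(A)\triangle g_\pi(A)\bigr)\leq\varphi(S)<521\e$, as required. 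Before doing anything substantial I would collect the elementary consequences of the two hypotheses, using only monotonicity and subadditivity of $\varphi$ together with the set identity $(P\cup R)\triangle(Q\cup R)\subseteq P\triangle Q$: (i) $\varphi\bigl(f(\emptyset)\bigr)=O(\e)$ and $\varphi\bigl(f(\{1,\dots,m\})\triangle Y\bigr)=O(\e)$ (combine the union hypothesis with the complement hypothesis applied to $A$ and $\{1,\dots,m\}\setminus A$); (ii) the images $f(\{1\}),\dots,f(\{m\})$ are \emph{pairwise almost disjoint}, i.e.\ $\varphi\bigl(f(\{i\})\cap f(\{i'\})\bigr)=O(\e)$ for $i\neq i'$; (iii) more generally $\varphi\bigl(f(A)\cap f(B)\bigr)=O(\e)$ whenever $A\cap B=\emptyset$.

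The difficulty — and the reason the theorem is not elementary — is that the only obvious route from ``$f$ is almost additive on disjoint pairs'' to ``$f$ is uniformly close to a join-homomorphism'' is to remove atoms from $A$ one at a time, which accumulates an error of order $m\e$ and is therefore useless. Breaking this barrier is exactly what the Kalton--Roberts theorem does in the scalar case, and the plan is to route through it. Fix a measure $\mu$ on $Y$ with $\mu\leq\varphi$ and put $F_\mu:=\mu\circ f\colon X\to[0,\infty)$. Using $\abs{\mu(E)-\mu(E')}\leq\varphi(E\triangle E')$ and the facts (ii)--(iii) above, one checks that $\abs{F_\mu(A\cup B)-F_\mu(A)-F_\mu(B)}=O(\e)$ for all disjoint $A,B$; after rescaling, Theorem~\ref{KR} furnishes a genuinely additive $G_\mu\colon X\to\R$ with $\n{F_\mu-G_\mu}_\infty=O(\e)$. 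Thus for \emph{every} measure $\mu\leq\varphi$ the composition $\mu\circ f$ is, uniformly over all $A\in X$, within $O(\e)$ of an honest additive functional on $X$ — the scalar manifestation of the additivity we are after.

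The crux — and where I expect essentially all the work, and the bookkeeping that produces the constant $521$, to reside — is to convert this family of scalar, measure-by-measure estimates into a \emph{single} combinatorial statement about $f$, and it is precisely here that the nonpathologicality of $\varphi$ enters, via $\varphi=\sup\{\mu:\mu\leq\varphi\}$ (the supremum being attained, since $Y$ is finite, on the compact polytope of dominated measures). The goal is to manufacture the set $S$ above: a $\varphi$-small subset of $\{1,\dots,n\}$ outside of which the sets $f(\{i\})$ genuinely partition their union and, for every $A\in X$, $f(A)$ genuinely coincides with $\bigcup_{i\in A}f(\{i\})$. I would attempt this by running a Kalton--Roberts-style design/counting argument directly at the level of sets — testing against the extreme points of the polytope of dominated measures and using the scalar output above as the key input — so that the $m\e$-type losses cancel and one is left with a bound independent of $m$ and $n$; carrying this through with the explicit constant is the main obstacle.

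Granting such a $\pi$ and $S$, the endgame is routine: for $j\notin S$ there is a unique $i=\pi(j)$ with $j\in f(\{i\})$, and for any $A\in X$ and $j\notin S$ one has $j\in f(A)$ iff $j\in\bigcup_{i\in A}f(\{i\})$ iff $\pi(j)\in A$, so $f(A)\triangle\pi^{-1}(A)\subseteq S$; putting $\pi(j):=\ast$ for $j\in S$ completes the construction, $g_\pi$ is a lattice homomorphism by the opening remark, and $\varphi\bigl(f(A)\triangle g_\pi(A)\bigr)\leq\varphi(S)<521\e$ for every $A$, which is the assertion.
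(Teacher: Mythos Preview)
The paper does not prove this theorem. Farah's result is quoted in the introduction purely as background and motivation, with a citation to \cite{F}; the paper's own contributions begin with Lemma~\ref{sep-thm} and concern a different circle of ideas (separation-type results and stability via error functions or neighborhood systems). So there is no ``paper's own proof'' against which to compare your attempt.

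As for the proposal itself: it is a plan, not a proof, and you say as much. The structural observations are sound --- every lattice homomorphism between finite power-set algebras does arise from a partial map in the way you describe, and reducing to the construction of a small exceptional set $S$ outside of which $f$ genuinely coincides with $A\mapsto\pi^{-1}(A)$ is a correct reformulation of the target. The idea of composing with dominated measures $\mu\leq\varphi$ and invoking Theorem~\ref{KR} on each $\mu\circ f$ is also reasonable and is, in spirit, how Farah proceeds. But the passage you flag as ``the crux'' --- upgrading the family of scalar Kalton--Roberts approximations, one for each dominated measure, to a single $\varphi$-small exceptional set with a bound independent of $m,n$ --- is the entire content of Farah's argument, and you have not supplied it. Saying you ``would attempt this by running a Kalton--Roberts-style design/counting argument directly at the level of sets'' is not a proof step; it is a restatement of the problem. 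Until that step is actually carried out (and it is genuinely delicate: this is where the constant $521$ comes from and where nonpathologicality is used in an essential way), what you have written is an outline of the shape a proof might take, with the hard part explicitly omitted.
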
 

In the light of the result above, one seemingly natural approach to the stability problem in lattices would be to assume that a~given map $f\colon X\to Y$ between lattices $X$ and $Y$ satisfies $f(x\vee y)\div (f(x)\vee f(y))\leq\e$ for all $x,y\in X$ and some fixed $\e\in Y$. However, such an~approach turns out to be too naive, at least in the~case where $Y$ is assumed to be a~Boolean algebra. Indeed, just define $g(x)=f(x)\setminus \e$ and notice that $g$ is then a~lattice homorphism satisfying $f(x)\div g(x)\leq\e$ for each $x\in X$. Therefore, we propose two different ways of formalizing the stability problem---one involves error functions instead of the~constant factor $\e$, while the~other expresses the~closedness of $f(x\vee y)$ to $f(x)\vee f(y)$ in terms of a~system of neighborhoods (see Theorems~\ref{main} and \ref{main2} below, respectively). 

Both of our approaches share a~common root which is a~certain separation, or a~`sandwich-type', result (see Lemma~\ref{sep-thm} below). It is related to some already known separation theorems.
\begin{theorem}[F\"{o}rg-Rob, Nikodem, P\'{a}les \cite{FRNP}]\label{FRNP1} 
Assume that a~function  $f\colon\mathbb{R}\to\mathbb{R}$ is quasi-concave {\rm (}i.e. $f(x)\geq\min \{ f(a), f(b)\}$ for all $a\leq x\leq b${\rm )}, a~function $g
\colon \mathbb{R}\to\mathbb{R}$ is quasi-convex {\rm (}i.e. $g(x)\leq\max \{ g(a), g(b)\}$ for all $a\leq x\leq b${\rm )} and we have $f(x)\leq g(x)$ for every $x\in\R$. Then there exists a~monotone map $h\colon\mathbb{R}\to\mathbb{R}$ such that $f(x)\leq h(x)\leq g(x)$ for every $x\in\R$.
\end{theorem}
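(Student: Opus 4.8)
The plan has two ingredients: an explicit envelope construction that produces a monotone separator under a suitable order condition, and a dichotomy guaranteeing that one of two such conditions always holds. For the first ingredient I would record the elementary fact that if $f(x)\leq g(y)$ for all $x\leq y$, then $h(x):=\sup_{t\leq x}f(t)$ is a non-decreasing map with $f\leq h\leq g$: it dominates $f$ (take $t=x$ in the supremum), it is non-decreasing because enlarging the index set cannot decrease a supremum, and $h(x)\leq g(x)$ since every $f(t)$ with $t\leq x$ is $\leq g(x)$ by assumption (which also shows that $h$ is real-valued). Symmetrically, if $f(x)\leq g(y)$ for all $x\geq y$, then $h(x):=\sup_{t\geq x}f(t)$ is a non-increasing separator. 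Hence it suffices to prove the dichotomy: \emph{either $f(x)\leq g(y)$ for all $x\leq y$, or $f(x)\leq g(y)$ for all $x\geq y$.}

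To prove the dichotomy I would argue by contradiction. If both alternatives fail, there are $x_1\leq y_1$ with $f(x_1)>g(y_1)$ and $y_2\leq x_2$ with $f(x_2)>g(y_2)$; since $f\leq g$, the inequalities among the indices are in fact strict, i.e. $x_1<y_1$ and $y_2<x_2$. Applying, if necessary, the reflection $x\mapsto -x$ --- which maps quasi-concave functions to quasi-concave ones and quasi-convex to quasi-convex, preserves $f\leq g$, and interchanges the two alternatives together with the roles of the two pairs of witnesses --- I may assume $f(x_1)\geq f(x_2)$; then $g(y_1)<f(x_1)$ and $g(y_2)<f(x_2)\leq f(x_1)$. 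Now I split according to the position of $y_2$ relative to $x_1$. If $y_2=x_1$, then $f(x_1)=f(y_2)\leq g(y_2)<f(x_2)\leq f(x_1)$, which is absurd. If $y_2<x_1$, then $y_2<x_1<y_1$, and quasi-convexity of $g$ on $[y_2,y_1]$ gives $g(x_1)\leq\max\{g(y_2),g(y_1)\}<f(x_1)\leq g(x_1)$, again absurd. If $x_1<y_2$, then $x_1<y_2<x_2$, and quasi-concavity of $f$ on $[x_1,x_2]$ gives $f(y_2)\geq\min\{f(x_1),f(x_2)\}=f(x_2)$, so that $f(x_2)\leq f(y_2)\leq g(y_2)<f(x_2)$, absurd. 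This establishes the dichotomy, and with it the theorem.

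The step I expect to be the real obstacle is isolating and proving the dichotomy. A single monotone direction genuinely need not suffice: the obstruction $f(x_0)>g(y_0)$ for some $x_0<y_0$ can occur (and, symmetrically, $f(x_0)>g(y_0)$ for some $x_0>y_0$), each one ruling out exactly one of the two envelope constructions. The content of the dichotomy is precisely that quasi-concavity of $f$ and quasi-convexity of $g$, in the presence of $f\leq g$, prevent both obstructions from occurring simultaneously; once this is recognized, the envelope construction, the reflection reduction, and the bookkeeping of coincidences among the witness points are all routine.
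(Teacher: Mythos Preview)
The paper does not contain a proof of this theorem: it is quoted in the introduction as a result of F\"{o}rg-Rob, Nikodem and P\'{a}les \cite{FRNP}, so there is no in-paper argument to compare your proposal against.

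That said, your proof is correct. The envelope construction $h(x)=\sup_{t\leq x}f(t)$ is exactly the device the paper uses in its own separation lemma (Lemma~\ref{sep-thm}, formula~\eqref{defF}); the point peculiar to the real line, which the paper's lattice framework does not address, is precisely your dichotomy. In Lemma~\ref{sep-thm} the upper function $\Psi$ is already assumed to satisfy $\Psi(x\vee y)\geq\Psi(x)\vee\Psi(y)$, which on $\mathbb{R}$ forces $\Psi$ to be increasing, so a single monotone direction is built in from the start. Quasi-convexity of $g$ is genuinely weaker, and your contradiction argument---reducing via the reflection $x\mapsto -x$ to $f(x_1)\geq f(x_2)$ and then splitting on the position of $y_2$ relative to $x_1$---cleanly shows that both obstructions cannot occur simultaneously. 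All three cases check out: the case $y_2<x_1$ uses quasi-convexity of $g$ on $[y_2,y_1]$, the case $x_1<y_2$ uses quasi-concavity of $f$ on $[x_1,x_2]$, and the degenerate case $y_2=x_1$ is immediate from $f\leq g$.
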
 
W. Kubi\'{s} \cite{K} noted that a similar `sandwich-type' theorem is valid for maps between linearly ordered spaces (see \cite[Thm.~2.1]{K}), whereas it fails to hold for maps from $\mathbb{R}^2$ to $\mathbb{R}$ and actually even for maps from the~four-element Boolean algebra $\{0,1\}^2$ to the~three-element linearly ordered space $\{0,1,2\}$. Moreover, Kubi\'{s} showed (\cite[Thm.~3.3]{K}) that a~`sandwich-type' theorem for the class of $S_{4}$ bi-convexity spaces holds true when the image space is a~complete Boolean algebra (being a~bi-convexity space with convexities consisting of ideals and filters). The~class of $S_4$ bi-convexity spaces includes, for example, real vector spaces (for more information consult \cite{V}). As a~~
consequence of this quite abstract version of a~separation theorem, Kubi\'{s} derived the~following corollary (see \cite[Thm.~3.7]{K}).
\begin{theorem}[Kubi\'{s} \cite{K}]\label{Kt} 
Let $L$ be a distributive lattice, $\mathbb{B}$ be a complete Boolean algebra, $f, g\colon L \to\mathbb{B}$ and assume that $f$ is a~meet homomorphism, $g$ is a~join homomorphism and $f(x)\leq g(x)$ for $x\in L$. Then there exists a~lattice homomorphism $h\colon L \to\mathbb{B}$ such that $f(x)\leq h(x)\leq g(x)$ for every $x\in L$. 
\end{theorem}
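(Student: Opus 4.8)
The plan is to build $h$ by transfinite recursion, extending a partial lattice homomorphism one element at a time, so that the whole difficulty gets concentrated in a single one-point extension step. First I would set up the Zorn skeleton: let $\mathcal{Z}$ be the poset of pairs $(K,h_K)$, where $K\subseteq L$ is a sublattice and $h_K\colon K\to\mathbb{B}$ is a lattice homomorphism with $f|_K\leq h_K\leq g|_K$, ordered by extension. Any one-point sublattice with $h_K=f$ on it lies in $\mathcal{Z}$, and the union along a chain in $\mathcal{Z}$ again lies in $\mathcal{Z}$, so there is a maximal $(K^\ast,h^\ast)$. If $K^\ast\neq L$, I pick $t\in L\setminus K^\ast$ and extend $h^\ast$ to the sublattice $K^\ast[t]$ generated by $K^\ast\cup\{t\}$, contradicting maximality; hence $K^\ast=L$ and $h:=h^\ast$ works. (One uses here that $f$, being a meet homomorphism, and $g$, being a join homomorphism, are automatically monotone, which keeps the sandwich bounds coherent under restriction and union.)

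So the core is the one-point extension: given $(K,h_K)\in\mathcal{Z}$ and $t\in L\setminus K$, find $\tau\in\mathbb{B}$ for which $\hat h(t):=\tau$ extends $h_K$ to a member of $\mathcal{Z}$ on $K[t]$. Since $K[t]$ is generated by $K\cup\{t\}$, such an extension is determined by $\tau$; and, using distributivity of $L$ to reduce each element of $K[t]$ to a normal form such as $(a\wedge t)\vee b$ with $a,b\in K$, the demand that $\hat h$ be a well-defined lattice homomorphism with $f\leq\hat h\leq g$ unfolds into a family of inequalities $\lambda_i\leq\tau$ and $\tau\leq\mu_j$, where the $\lambda_i$ and $\mu_j$ are Boolean combinations of $f(t)$, $g(t)$ and the values $h_K(a)$. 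Using completeness of $\mathbb{B}$ I would set $\ell=\bigvee_i\lambda_i$ and $u=\bigwedge_j\mu_j$; everything then reduces to proving $\ell\leq u$ — equivalently, $\lambda_i\leq\mu_j$ for every pair $i,j$, a finitary statement in $\mathbb{B}$ — for then any $\tau\in[\ell,u]$, say $\tau=\ell$, does the job, modulo a routine verification that the resulting $\hat h$ really preserves $\wedge$ and $\vee$.

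The inequalities $\lambda_i\leq\mu_j$ form the step I expect to be the main obstacle, and this is also where the hypothesis that $\mathbb{B}$ is Boolean, and not merely a complete distributive lattice, must be used. The plan there: an equality between two normal-form presentations of the same element of $K[t]$ produces, via distributivity of $L$, identities among the corresponding members of $K$; one transports these through $h_K$ and combines them with $f|_K\leq h_K\leq g|_K$, with $f\leq g$, and with $f$ preserving meets and $g$ preserving joins. The manipulations that turn a relation $p\leq q\vee r$ into $p\wedge r'\leq q$ and invoke the distributive law in $\mathbb{B}$ are precisely where complementation is needed — and it really is needed, since the conclusion fails with a three-element chain in place of $\mathbb{B}$, even for $L$ the four-element Boolean algebra: there the constraints force $\tau$ to be, on suitable pieces, a relative complement of some $h_K(a)$ inside an interval of $\mathbb{B}$, and such relative complements exist exactly because $\mathbb{B}$ is Boolean. (Equivalently, one may picture $h$ through the Stone space $S$ of $\mathbb{B}$: for each $s\in S$ one wants a prime filter $P_s$ of $L$ containing $\{x:s\in f(x)\}$ and missing the ideal $\{x:s\notin g(x)\}$, which exists by the Birkhoff--Stone prime filter theorem since $L$ is distributive, and then $h(x):=\{s\in S:x\in P_s\}$ automatically satisfies $f(x)\leq h(x)\leq g(x)$ and the homomorphism identities; completeness of $\mathbb{B}$, i.e.\ extremal disconnectedness of $S$, is what lets the $P_s$ be chosen coherently enough that each $h(x)$ is clopen, and the transfinite extension above is one way to organise that choice.)
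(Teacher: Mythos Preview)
The paper does not contain a proof of this statement: Theorem~\ref{Kt} is quoted from Kubi\'{s}~\cite{K} as background, with the remark that it is derived there as a corollary of an abstract sandwich theorem for $S_4$ bi-convexity spaces. There is therefore no ``paper's own proof'' to compare your proposal against.

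As to your proposal itself: the Zorn-plus-one-point-extension skeleton is a sensible strategy, and your remark that elements of $K[t]$ admit a normal form $(a\wedge t)\vee b$ (modulo care when $K$ lacks a top or bottom) is essentially correct by distributivity. However, you explicitly flag the family of inequalities $\lambda_i\leq\mu_j$ as ``the main obstacle'' and then do not prove them; since this is exactly where the argument lives, what you have written is a plan rather than a proof. Your parenthetical Stone-space sketch is in fact closer in spirit to Kubi\'{s}'s actual route (separation of a filter from an ideal by a prime filter, organised pointwise over the Stone space), but there too the crucial point---why the resulting $h(x)$ lands in $\mathbb{B}$ rather than merely in $2^S$, i.e.\ why the choice of prime filters $P_s$ can be made coherently---is asserted rather than established. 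If you want a self-contained argument, either carry out the $\lambda_i\leq\mu_j$ verification in full (it is a finite case analysis using complementation in $\mathbb{B}$ together with $f(a\wedge b)=f(a)\wedge f(b)$, $g(a\vee b)=g(a)\vee g(b)$, and $f\leq h_K\leq g$ on $K$), or consult \cite[Thm.~3.7]{K} directly.
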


As an~application of their Theorem~\ref{FRNP1}, F\"{o}rg-Rob, Nikodem and P\'{a}les showed the following result which yields an~Ulam-type stability for monotone maps.
\begin{theorem}[F\"{o}rg-Rob, Nikodem, P\'{a}les \cite{FRNP}]\label{Pal} 
Let $I\subset \R$ be an interval, $\e\geq 0$ and assume that a~function $f\colon I\to \R$ satisfies
$$
\min\{f(x),f(y)\}-\e\leq f(tx+(1-t)y)\leq \max\{f(x),f(y)\}+\e\quad\mbox{for } x,y\in I,\, t\in[0,1].
$$ 
Then there exists a monotone function $g\colon I\to \R$ such that $|f(x)-g(x)|\leq\e/2$ for every $x\in I$.
\end{theorem}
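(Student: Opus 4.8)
The plan is to reduce the statement to the sandwich Theorem~\ref{FRNP1}. Out of $f$ I would form its least quasi-concave majorant and its greatest quasi-convex minorant,
$$
\phi(x)=\sup\bigl\{\min\{f(a),f(b)\}:a,b\in I,\ a\leq x\leq b\bigr\},\qquad
\psi(x)=\inf\bigl\{\max\{f(a),f(b)\}:a,b\in I,\ a\leq x\leq b\bigr\}.
$$
Choosing $a=b=x$ gives $\psi\leq f\leq\phi$, and rewriting the hypothesis as $\min\{f(a),f(b)\}-\e\leq f(x)$ and $f(x)\leq\max\{f(a),f(b)\}+\e$ whenever $a\leq x\leq b$ yields at once $\phi\leq f+\e$ and $\psi\geq f-\e$; in particular $\phi,\psi$ are real-valued. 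That $\phi$ is quasi-concave is a~short formal check: if $p\leq z\leq q$ in $I$ and $c<\min\{\phi(p),\phi(q)\}$, then (by the definition of $\phi$) there are $a_1\leq p$ and $b_2\geq q$ in $I$ with $f(a_1),f(b_2)>c$; since $a_1\leq z\leq b_2$ we get $\phi(z)\geq\min\{f(a_1),f(b_2)\}>c$, and letting $c\uparrow\min\{\phi(p),\phi(q)\}$ shows $\phi(z)\geq\min\{\phi(p),\phi(q)\}$. Dually, $\psi$ is quasi-convex. So $f$ already lies within $\e$ of both $\phi$ and $\psi$; the task is to halve this.

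The crux is the estimate $\phi(x)-\psi(x)\leq\e$ for every $x\in I$, and this is the one step I expect to require genuine care. It suffices to show $\min\{f(a),f(b)\}\leq\max\{f(c),f(d)\}+\e$ whenever $a,b,c,d\in I$ with $a\leq x\leq b$ and $c\leq x\leq d$. If $a\leq c$, then $c\in[a,b]$, so the first inequality in the hypothesis gives $f(c)\geq\min\{f(a),f(b)\}-\e$, hence
$$
\min\{f(a),f(b)\}\leq f(c)+\e\leq\max\{f(c),f(d)\}+\e .
$$
If $c\leq a$, then $a\in[c,d]$, so the second inequality gives $f(a)\leq\max\{f(c),f(d)\}+\e$, and again $\min\{f(a),f(b)\}\leq f(a)\leq\max\{f(c),f(d)\}+\e$. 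Taking the supremum over all $(a,b)$ and the infimum over all $(c,d)$ as above yields $\phi(x)\leq\psi(x)+\e$.

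To conclude, set $u=\phi-\e/2$ and $v=\psi+\e/2$. A~constant shift preserves quasi-concavity and quasi-convexity, so $u$ is quasi-concave and $v$ is quasi-convex, while $u\leq v$ by the previous estimate. The sandwich theorem --- Theorem~\ref{FRNP1} when $I=\R$, and its extension to arbitrary linearly ordered domains noted in \cite[Thm.~2.1]{K} for a~general interval --- then provides a~monotone map $g\colon I\to\R$ with $u\leq g\leq v$. Since $f-\e/2\leq\phi-\e/2=u\leq g$ and $g\leq v=\psi+\e/2\leq f+\e/2$, we get $\abs{f(x)-g(x)}\leq\e/2$ for every $x\in I$, as required. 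Apart from the displayed inequality $\phi-\psi\leq\e$, all of this is routine.
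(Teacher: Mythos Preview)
Your argument is correct. The key inequality $\phi-\psi\leq\e$ is handled cleanly by the two-case split on whether $a\leq c$ or $c\leq a$, and the quasi-concavity of $\phi$ (and dually the quasi-convexity of $\psi$) follows exactly as you indicate by combining witnesses at $p$ and $q$.

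Your route, however, differs from the paper's. The paper does not prove Theorem~\ref{Pal} directly; it merely observes at the end of Section~3 that Theorem~\ref{Pal} can be recovered from Corollaries~\ref{cor21} and~\ref{cor22}, which in turn are consequences of the general lattice stability result Theorem~\ref{main} (via the separation Lemma~\ref{sep-thm}). Thus the paper's derivation passes through its own lattice-theoretic machinery, with the functions $\Phi$ and $\Psi$ built as infima and suprema over all decompositions $x=x_1\vee\cdots\vee x_n$. You instead bypass that apparatus entirely and go straight to the original F\"org-Rob--Nikodem--P\'ales sandwich Theorem~\ref{FRNP1} (supplemented by Kubi\'s's extension for general intervals), constructing the quasi-concave and quasi-convex envelopes $\phi,\psi$ explicitly and shifting by $\e/2$ to force the separation inequality. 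Your approach is more elementary and self-contained for this particular real-variable statement, and it makes transparent exactly where the factor $\e/2$ comes from; the paper's approach, by contrast, is meant to exhibit Theorem~\ref{Pal} as a special case of a~much broader lattice framework, which is the point of the surrounding paper even if it is less direct here.
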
\noindent
We will see that this theorem can be easily derived from our results on approximate lattice homomorphisms.

\section{Results}
Recall that a lattice is called {\it conditionally complete} provided every its bounded subsets admits the~least upper bound and the~greatest lower bound. A~map $f$ between lattices $X$ and $Y$ is called a~{\it join homomorphism} if it preserves joins, {\it i.e.} $f(x\vee y)=f(x)\vee f(y)$ for $x,y\in X$, and it is called a~{\it meet homomorphism} if it preserves meets, {\it i.e.} $f(x\wedge y)=f(x)\wedge f(y)$ for $x,y\in X$. It is called a~{\it lattice homomorphism} if it is both a~join and a~meet homomorphism. In the proofs of our stability theorems we shall need the~following separation result.
\begin{lemma}\label{sep-thm} 
Let $X$ be a distributive lattice and $Y$ be a conditionally complete lattice. Assume that maps $\Phi,\Psi\colon X\to Y$ satisfy the following conditions: $\Phi\leq \Psi$,
$$\Phi(x\vee y)\leq\Phi(x)\vee\Phi(y)\quad\mbox{for } x,y\in X,$$
$$\Psi(x\vee y)\geq \Psi(x)\vee \Psi(y)\quad\mbox{for } x,y\in X.$$ 
Then there exists a join homomorphism $F\colon X\to Y$ such that $\Phi\leq F\leq \Psi$.
\end{lemma}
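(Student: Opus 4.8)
The plan is to build $F$ explicitly as the least ``join-stable'' majorant of $\Phi$ still lying below $\Psi$. Concretely, I would set
\[
F(x)=\bigvee\{\Phi(y):y\in X,\ y\leq x\}\qquad(x\in X).
\]
First one checks this is well defined. Since $\Psi$ satisfies the super-join inequality it is monotone (if $a\leq b$ then $\Psi(b)=\Psi(a\vee b)\geq\Psi(a)\vee\Psi(b)\geq\Psi(a)$), so for $y\leq x$ we get $\Phi(y)\leq\Psi(y)\leq\Psi(x)$; hence the set whose supremum defines $F(x)$ is nonempty and bounded above by $\Psi(x)$, and its supremum exists by conditional completeness of $Y$. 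The same inequality gives $F(x)\leq\Psi(x)$, while taking $y=x$ gives $\Phi(x)\leq F(x)$; thus $\Phi\leq F\leq\Psi$.

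It then remains to prove that $F$ is a join homomorphism. One inequality is free: $F$ is monotone by construction, so $F(x)\vee F(z)\leq F(x\vee z)$. For the reverse inequality, fix $y\leq x\vee z$ and use distributivity of $X$ to write $y=y\wedge(x\vee z)=(y\wedge x)\vee(y\wedge z)$. The hypothesis that $\Phi$ is a sub-join map then yields
\[
\Phi(y)\leq\Phi(y\wedge x)\vee\Phi(y\wedge z)\leq F(x)\vee F(z),
\]
the last step because $y\wedge x\leq x$ and $y\wedge z\leq z$. As $F(x)\vee F(z)$ is thus an upper bound of $\{\Phi(y):y\leq x\vee z\}$, and $F(x\vee z)$ is its least upper bound, we conclude $F(x\vee z)\leq F(x)\vee F(z)$, and the lemma follows.

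The only point requiring genuine care is the interaction of the (possibly infinite) suprema with the lattice operations, and it is precisely here that the ``ascending'' construction from $\Phi$ is the right choice rather than the dual ``descending'' one, $x\mapsto\bigwedge\{\Psi(x_1)\vee\cdots\vee\Psi(x_n):x=x_1\vee\cdots\vee x_n\}$: the identity $b\vee\bigvee_\alpha a_\alpha=\bigvee_\alpha(b\vee a_\alpha)$ holds in every lattice whenever the suprema exist, whereas the meet-analogue needed for the descending version would require an infinite distributive law unavailable in an arbitrary conditionally complete $Y$. I would also note that the super-join hypothesis on $\Psi$ is used only through its consequence that $\Psi$ is monotone, and that $F$ admits the equivalent description $F(x)=\bigvee\{\Phi(x_1)\vee\cdots\vee\Phi(x_n):x=x_1\vee\cdots\vee x_n\}$; recording this equivalence as a one-line remark is optional, since the first formula already makes all three required verifications short.
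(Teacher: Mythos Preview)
Your proof is correct and follows essentially the same route as the paper: the same definition $F(x)=\sup\{\Phi(z):z\leq x\}$, the same use of monotonicity of $\Psi$ to bound the supremum, and the same distributivity trick $y=(y\wedge x)\vee(y\wedge z)$ to obtain $F(x\vee z)\leq F(x)\vee F(z)$. Your closing remarks (that only monotonicity of $\Psi$ is really used, and the alternative description of $F$) are accurate extras not present in the paper's proof.
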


\begin{proof}
Notice that for each $x\in X$ and each $z\in X$ with $z\leq x$ we have 
\begin{equation}\label{1}
\Phi(z)\leq\Psi(z)\leq\Psi(z)\vee\Psi(x)\leq\Psi(z\vee x)=\Psi(x).
\end{equation} 
Hence, the set $\{\Phi(z):\ z\leq x\}$ is bounded from above and we can define 
\begin{equation}\label{defF}
F(x)=\sup\{\Phi(z)\colon z\leq x\}\quad\mbox{for } x\in X.
\end{equation} 
Plainly, we have $F(x)\geq \Phi(x)$, while \eqref{1} implies that also $F(x)\leq\Psi(x)$ for every $x\in X$.
 
Now, fix any $x,y\in X$ and consider an~arbitrary $z\leq x\vee y$. Since 
$$
z=z\wedge (x\vee y)=(z\wedge x)\vee (z\wedge y),
$$
we have
$$
\Phi(z)\leq \Phi(z\wedge x)\vee\Phi(z\wedge y)\leq F(x)\vee F(y).
$$ 
From the definition of $F$ we thus get that $F(x\vee y)\leq F(x)\vee F(y)$. Moreover, $F$ is a~monotone increasing function, therefore,  
$F(x)\leq F(x\vee y)$ and $F(y)\leq F(x\vee y)$ which implies that $F(x)\vee F(y)\leq F(x\vee y)$ and finishes the~proof.
\end{proof}

Notice that by interchanging $\vee$ and $\wedge$ in the~lattice $X$ or $Y$ (or both), we can derive three analogous results to Lemma~\ref{sep-thm}. Moreover, combining this lemma with the Kubi\'s result (Theorem~\ref{Kt}), we infer that in the case where $L$ is a~distributive lattice, $\mathbb{B}$ is a~complete Boolean algebra and maps $\Phi_{1}, \Phi_{2},\Psi_{1},\Psi_{2}\colon L\to \mathbb{B}$ satisfy $\Psi_{2}\leq\Phi_{2}\leq\Phi_{1}\leq \Psi_{1}$ along with the~following system of inequalites: 
$$\Psi_{2}(x\wedge y)\leq \Psi_{2}(x)\wedge  \Psi_{2}(y)$$
$$\Phi_{2}(x\wedge y)\geq\Phi_{2}(x)\wedge \Phi_{2}(y)$$
$$\Phi_{1}(x\vee y)\leq\Phi_{1}(x)\vee\Phi_{1}(y)$$
$$\Psi_{1}(x\vee y)\geq \Psi_{1}(x)\vee \Psi_{1}(y)$$
for all $x,y\in L$, then there is a~lattice homomorphism $H\colon L \to\mathbb{B}$ lying between $\Psi_{2}$ and $\Psi_{1}$, {\it i.e.} $\Psi_2\leq H\leq\Psi_1$.

We are in a position to prove our first stability result.
\begin{theorem}\label{main}
Let $X$ and $Y$ be distributive lattices and assume that $Y$ is conditionally complete and satisfies the~dual to the~infinite distributive law, that is, 
$$
y\vee\inf S=\inf\{y\vee s\colon s\in S\}
$$
for every $y\in Y$ and $S\subset Y$ bounded from below. Assume that maps $f\colon X\to Y$, $\phi,\psi\colon X\times X\to Y$ satisfy the~following conditions: 
\begin{equation}\label{11}
\phi(z,z)\leq\phi(x,y)\quad\mbox{for }x,y,z\in X\mbox{ with }x,y\leq z,
\end{equation}
\begin{equation}\label{4}
\psi(x,y)\leq\psi(z,z)\quad\mbox{for }x,y,z\in X\mbox{ with }x,y\leq z
\end{equation}
and
\begin{equation}\label{2}
\phi(x,y)\wedge f(x\vee y)\leq f(x)\vee f(y)\leq f(x\vee y)\vee \psi(x,y)\quad\mbox{for }x,y\in X.
\end{equation} 
Then there exists a join homomorphism $F\colon X\to Y$ such that
\begin{equation}\label{7}
\phi(x,x)\wedge f(x)\leq F(x)\leq f(x)\vee \psi(x,x)\quad\mbox{for }x\in X.
\end{equation} 
\end{theorem}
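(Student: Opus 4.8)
The plan is to reduce the statement to the separation result of Lemma~\ref{sep-thm} by picking the right pair of ``sandwiching'' maps. Set
\[
\Phi(x):=\phi(x,x)\wedge f(x),\qquad \Psi(x):=f(x)\vee\psi(x,x)\qquad(x\in X).
\]
Then $\Phi(x)\leq f(x)\leq\Psi(x)$, so $\Phi\leq\Psi$ for free, and everything comes down to checking that $\Phi$ is a sub-join-homomorphism, $\Phi(x\vee y)\leq\Phi(x)\vee\Phi(y)$, and $\Psi$ a super-join-homomorphism, $\Psi(x\vee y)\geq\Psi(x)\vee\Psi(y)$. Granting these, Lemma~\ref{sep-thm}---whose hypotheses (distributivity of $X$, conditional completeness of $Y$) are part of our assumptions---produces a join homomorphism $F$ with $\Phi\leq F\leq\Psi$, which is precisely~\eqref{7}.

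To verify the inequality for $\Phi$, put $w=x\vee y$. Applying~\eqref{11} with $z=w$ to the pairs $(x,y)$, $(x,x)$ and $(y,y)$ yields $\phi(w,w)\leq\phi(x,y)$ and $\phi(w,w)\leq\phi(x,x)\wedge\phi(y,y)$. Hence $\Phi(w)=\phi(w,w)\wedge f(w)\leq\phi(x,y)\wedge f(w)\leq f(x)\vee f(y)$ by~\eqref{2}, and also $\Phi(w)\leq\phi(x,x)\wedge\phi(y,y)$; combining the two bounds,
\[
\Phi(w)\leq\bigl(\phi(x,x)\wedge\phi(y,y)\bigr)\wedge\bigl(f(x)\vee f(y)\bigr).
\]
Now distributivity of $Y$ rewrites the right-hand side as $\bigl(\phi(x,x)\wedge\phi(y,y)\wedge f(x)\bigr)\vee\bigl(\phi(x,x)\wedge\phi(y,y)\wedge f(y)\bigr)$, which is at most $\Phi(x)\vee\Phi(y)$. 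The inequality for $\Psi$ is dual and in fact easier: by~\eqref{4} with $z=w$ one gets $\psi(x,x)\vee\psi(y,y)\leq\psi(w,w)$ and $\psi(x,y)\leq\psi(w,w)$, and then~\eqref{2} gives $f(x)\vee f(y)\leq f(w)\vee\psi(x,y)\leq f(w)\vee\psi(w,w)=\Psi(w)$; together with $\psi(x,x),\psi(y,y)\leq\psi(w,w)\leq\Psi(w)$ this forces $\Psi(x),\Psi(y)\leq\Psi(w)$, hence $\Psi(x)\vee\Psi(y)\leq\Psi(w)$.

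There is no serious obstacle here: the argument is essentially careful bookkeeping of one-sided bounds extracted from~\eqref{11},~\eqref{4} and~\eqref{2}, and the only genuinely computational point is the distributive splitting displayed above---which, I should note, uses only \emph{finite} distributivity of $Y$. I do not see the assumed dual infinite distributive law entering this deduction; presumably it is needed in order to describe $F$ explicitly as the map $x\mapsto\sup\{\phi(z,z)\wedge f(z)\colon z\leq x\}$ furnished by the proof of Lemma~\ref{sep-thm}, or in the companion results of the paper. I would keep an eye on that point while reading the authors' proof.
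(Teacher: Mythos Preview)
Your proof is correct and in fact cleaner than the authors'. You take the bounds in~\eqref{7} themselves as the sandwiching pair $\Phi,\Psi$ and verify the sub-/super-homomorphism inequalities directly, using only finite distributivity of $Y$. The paper instead defines
\[
\Phi(x)=\inf\Bigl\{\,\bigvee_{i=1}^n f(x_i): x=\bigvee_{i=1}^n x_i\Bigr\},\qquad
\Psi(x)=\sup\Bigl\{\,\bigvee_{i=1}^n f(x_i): x=\bigvee_{i=1}^n x_i\Bigr\},
\]
proves by induction on $n$ that these quantities lie between $\phi(x,x)\wedge f(x)$ and $f(x)\vee\psi(x,x)$, and then checks the sub-/super-homomorphism conditions; it is precisely at the step $\Phi(x\vee y)\leq\Phi(x)\vee\Phi(y)$ that the dual infinite distributive law is invoked (to push a join past the infimum defining $\Phi$). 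Your observation that this hypothesis is unnecessary for the theorem as stated is therefore on the mark. What the authors' construction buys is a tighter sandwich with $\Phi\leq f\leq\Psi$, and the same $\Phi,\Psi$ are reused verbatim in the proof of Theorem~\ref{main2} and in Corollary~\ref{cor21}, where the explicit description of $\Phi$ (and hence of the resulting $F$) is exploited to sharpen the bound from $f(x)-\varepsilon\leq F(x)$ to $f(x)\leq F(x)$.
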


\begin{proof}
We start by proving that
\begin{equation}\label{3}
\phi(x,x)\wedge f(x)\leq f(x_1)\vee\ldots\vee f(x_n)\leq f(x)\vee\psi(x,x)
\end{equation}
for all $x_1,\ldots,x_n\in X$, where $x=x_1\vee\ldots\vee x_n$. 

We proceed by induction on $n$. For $n=1$ the~inequality is obvious. Assume \eqref{3} holds for some $n\in \N$ and all $x_1,\ldots,x_n\in X$. For arbitrary $x_1,\ldots,x_{n+1}\in X$ set $x=x_1\vee\ldots\vee x_n$ and $\bar{x}=x\vee x_{n+1}$. By the~induction hypothesis, \eqref{2} and \eqref{4}, we obtain
\begin{equation*}
\begin{split}
(f(x_1)\vee\ldots\vee f(x_n))\vee f(x_{n+1}) &\leq f(x)\vee \psi(x,x)\vee f(x_{n+1})\\
&\leq f(\bar{x})\vee \psi(x,x_{n+1})\vee \psi(x,x)\\
&\leq f(\bar{x})\vee \psi(\bar{x},\bar{x}).
\end{split}
\end{equation*}
Similarly, using \eqref{2} and \eqref{11} we get
\begin{equation*}
\begin{split}
(f(x_1)\vee\ldots\vee f(x_n))\vee f(x_{n+1}) &\geq (f(x)\wedge \phi(x,x))\vee f(x_{n+1})\\
&=(\phi(x,x)\vee f(x_{n+1}))\wedge (f(x)\vee f(x_{n+1}))\\
&\geq \phi(x,x)\wedge \phi(x,x_{n+1})\wedge f(\bar{x})\\
&\geq \phi(\bar{x},\bar{x})\wedge f(\bar{x})
\end{split}
\end{equation*}
which completes the~inductive proof of inequality~\eqref{3}.

Define functions $\Phi,\Psi\colon X\to Y$ be the~formulas 
$$\Phi(x)=\inf\bigl\{f(x_1)\vee\ldots\vee f(x_n)\colon n\in\N,\,  x_1,\ldots,x_n\in X,\, x=x_1\vee\ldots\vee x_n\bigr\}
$$
and
$$
\Psi(x)=\sup\bigl\{f(x_1)\vee\ldots\vee f(x_n)\colon n\in\N,\,  x_1,\ldots,x_n\in X,\, x=x_1\vee\ldots\vee x_n\bigr\}.
$$
Note that these definitions are correct as inequality~\eqref{3} guarantees that the~set under the~infimum and the~supremum sign is bounded. Note also that the~same inequality implies that
\begin{equation}\label{9}
\phi(x,x)\wedge f(x)\leq \Phi(x)\leq f(x)\quad\mbox{for }x\in X;
\end{equation} 
\begin{equation}\label{10}
f(x)\leq \Psi(x)\leq f(x)\vee \psi(x,x)\quad\mbox{for }x\in X.
\end{equation}
Moreover, notice that 
\begin{equation}\label{sub}
\Phi(x\vee y)\leq \Phi(x)\vee \Phi(y)\quad\mbox{for }x,y\in X;
\end{equation}
\begin{equation}\label{5}
\Psi(x\vee y)\geq \Psi(x)\vee \Psi(y)\quad\mbox{for }x,y\in X.
\end{equation} 
Indeed, inequality \eqref{sub} follows from the~assumed dual distributivity law and the~fact that for arbitrary $x_1,\ldots,x_n,y_1\ldots y_m\in X$ satisfying $x=x_1\vee\ldots\vee x_n$ and $y=y_1\vee\ldots\vee y_m$ we have
$$
\Phi(x\vee y)\leq (f(x_1)\vee \ldots\vee f(x_n))\vee(f(y_1)\vee\ldots\vee f(y_m)).
$$ 
For inequality \eqref{5} observe that for any $x_1,\ldots,x_n$ as above we have
$$
f(x_1)\vee\ldots\vee f(x_n)\leq f(x_1)\vee\ldots\vee f(x_n)\vee f(y)\leq \Psi(x\vee y)
$$
which yields $\Psi(x)\leq\Psi(x\vee y)$. Similarly, we get $\Psi(y)\leq \Psi(x\vee y)$, hence inequality \eqref{5}.
 
Finally, an appeal to Lemma~\ref{sep-thm} produces a~join homomorphism $F\colon X\to Y$ such that $\Phi\leq F\leq\Psi$. Combining it with \eqref{9} and \eqref{10} we obtain assertion \eqref{7} as desired.
\end{proof}
 
In our next result we express the assumption that $f(x)\vee f(y)$ is `close' to $f(x\vee y)$ with the~aid of a~system of lattice neighborhoods.
\begin{theorem}\label{main2}
Let $X$ and $Y$ be distributive lattices and assume that $Y$ is conditionally complete and satisfies the~dual to the~infinite distributive law. Assume moreover that there is a~function $\NN\colon  Y\to2^Y$, each of whose value is a~bounded set, and which satisfies the following conditions:
\begin{itemize}
\item[{\rm (i)}] $y\in\NN(y)$ for each $y\in Y$;
\item[{\rm (ii)}] if $t,u\in\NN(z)$ and $t\leq y\leq u$, then $y\in\NN(z)$;
\item[{\rm (iii)}] $\sup\NN(y)\in \NN(y)$ and $\inf \NN(y)\in \NN(y)$ for each $y\in Y$;
\item[{\rm (iv)}] if $t\in \NN(u)$ and $u\vee y\in \NN(z)$, then $t\vee y\in \NN(z)$.
\end{itemize} 
Then for every map $f\colon X\to Y$ satisfying 
\begin{equation}\label{bcnd}
f(x)\vee f(y)\in\NN(f(x\vee y))\quad\mbox{for }x,y\in X
\end{equation}
there exists a~join homomorphism $F\colon X\to Y$ such that $F(x)\in \NN(f(x))$ for every $x\in X$. 
\end{theorem}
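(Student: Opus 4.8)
The plan is to rerun essentially the argument from the proof of Theorem~\ref{main}, but now controlling the relevant sets by the neighborhoods $\NN(f(x))$ instead of by error functions. First I would establish, by induction on $n$, the containment
$$
f(x_1)\vee\ldots\vee f(x_n)\in\NN(f(x))\qquad\text{whenever }x=x_1\vee\ldots\vee x_n .
$$
For $n=1$ this is condition~(i). For the step from $n$ to $n+1$, put $x=x_1\vee\ldots\vee x_n$ and $\bar x=x\vee x_{n+1}$; by the inductive hypothesis $t:=f(x_1)\vee\ldots\vee f(x_n)\in\NN(f(x))$, and by \eqref{bcnd} we have $f(x)\vee f(x_{n+1})\in\NN(f(\bar x))$, so condition~(iv) applied with $u=f(x)$, $y=f(x_{n+1})$, $z=f(\bar x)$ yields $t\vee f(x_{n+1})=f(x_1)\vee\ldots\vee f(x_{n+1})\in\NN(f(\bar x))$, as required. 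This inductive step, with~(iv) playing the role of a \emph{replacement} axiom, is the crux of the whole argument; everything else parallels Theorem~\ref{main}.

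Next, since every value of $\NN$ is bounded and $Y$ is conditionally complete, the same formulas as in the proof of Theorem~\ref{main} define maps $\Phi,\Psi\colon X\to Y$ (namely $\Phi(x)$ is the infimum, and $\Psi(x)$ the supremum, of all joins $f(x_1)\vee\ldots\vee f(x_n)$ over representations $x=x_1\vee\ldots\vee x_n$). The containment above shows that the set over which these bounds are taken is a nonempty subset of $\NN(f(x))$ containing $f(x)$ itself (take $n=1$), whence
$$
\inf\NN(f(x))\leq\Phi(x)\leq f(x)\leq\Psi(x)\leq\sup\NN(f(x))\qquad\text{for }x\in X .
$$
Exactly as in the proof of Theorem~\ref{main} one then checks that $\Phi\leq\Psi$, that $\Phi(x\vee y)\leq\Phi(x)\vee\Phi(y)$ (this is where the dual infinite distributive law in $Y$ is used), and that $\Psi(x\vee y)\geq\Psi(x)\vee\Psi(y)$ for all $x,y\in X$.

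Finally, Lemma~\ref{sep-thm} produces a join homomorphism $F\colon X\to Y$ with $\Phi\leq F\leq\Psi$. Combined with the displayed estimate this gives $\inf\NN(f(x))\leq F(x)\leq\sup\NN(f(x))$ for every $x\in X$; since by~(iii) both $\inf\NN(f(x))$ and $\sup\NN(f(x))$ belong to $\NN(f(x))$, condition~(ii) forces $F(x)\in\NN(f(x))$, which is the assertion. I do not anticipate any real difficulty beyond the inductive claim in the first paragraph: the verification of the order and (sub/super)join properties of $\Phi$ and $\Psi$ is routine and has already been carried out, mutatis mutandis, in the proof of Theorem~\ref{main}.
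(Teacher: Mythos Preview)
Your proposal is correct and follows essentially the same route as the paper: the inductive step via condition~(iv), the definition of $\Phi$ and $\Psi$ as in Theorem~\ref{main}, the appeal to Lemma~\ref{sep-thm}, and the final use of (ii) and (iii) to place $F(x)$ in $\NN(f(x))$ all match the paper's argument. The only cosmetic difference is that the paper concludes $\Phi(x),\Psi(x)\in\NN(f(x))$ directly from (ii)--(iii) and then sandwiches $F$, whereas you first bound $\Phi,\Psi$ by $\inf\NN(f(x))$ and $\sup\NN(f(x))$ and then invoke (iii); this is the same reasoning in a slightly different order.
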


\begin{proof} By induction we can show that 
\begin{equation}\label{indN}
f(x_1)\vee\ldots\vee f(x_n)\in\NN(f(x_1\vee\ldots\vee x_n))
\end{equation}
for all $n\in\N$ and $x_1,\ldots,x_n\in X$. Indeed, condition (i) gives the assertion for $n=1$. Suppose condition \eqref{indN} is valid for a~fixed $n\in\N$ and let $x_1,\ldots,x_{n+1}\in X$. Set
$$
a=f(x_1)\vee\ldots\vee f(x_n),\,\,\,\, b=f(x_1\vee\ldots\vee x_n),\,\,\,\, c=f(x_{n+1}),\,\,\,\, d=f(x_1\vee\ldots\vee x_{n+1}).
$$
By condition \eqref{bcnd}, we have $b\vee c\in\NN(d)$, whereas our inductive hypothesis gives $a\in\NN(b)$. Therefore, by condition (iv) we infer that $a\vee c\in\NN(d)$ which completes the~inductive proof of \eqref{indN}.

Now, we define maps $\Phi,\Psi\colon X\to Y$ as in the~previous proof, that is,
$$\Phi(x)=\inf\bigl\{f(x_1)\vee\ldots\vee f(x_n)\colon n\in\N,\,  x_1,\ldots,x_n\in X,\, x=x_1\vee\ldots\vee x_n\bigr\},
$$
$$
\Psi(x)=\sup\bigl\{f(x_1)\vee\ldots\vee f(x_n)\colon n\in\N,\,  x_1,\ldots,x_n\in X,\, x=x_1\vee\ldots\vee x_n\bigr\}.
$$
Note that, by \eqref{indN}, every element of the set under the~infimum/supremum sign belongs to $\NN(f(x))$. Hence, assumptions (ii) and (iii) imply that
\begin{equation}\label{psiphi}
\Psi(x),\Phi(x)\in\NN(f(x))\quad\mbox{for }x\in X.
\end{equation}
Moreover, we have $\Phi\leq f\leq\Psi$ and similarly as in the~proof of Theorem~\ref{main} we show that $\Phi(x\vee y)\leq \Phi(x)\vee \Phi(y)$ and $\Psi(x\vee y)\geq \Psi(x)\vee \Psi(y)$ for all $x,y\in X$.

In view of Lemma \ref{sep-thm}, there is a~join homomorphism $F\colon X\to Y$ such that $\Phi\leq F\leq\Psi$. Finally, by \eqref{psiphi} and assumption (ii), we obtain $F(x)\in\NN(f(x))$ for every $x\in X$.
\end{proof}

Notice that by interchanging $\vee$ and $\wedge$ in $X$ or $Y$ (or both) we can obtain three analogous results to Theorems~\ref{main} and \ref{main2}.

\begin{example}
Condition (ii) above claims, in the lattice terminology, that each $\NN(z)$ is {\it convex}. By (iii), we require that it is closed under the~sup/inf operations. Condition (iv) seems a~bit demanding, however, it is satisfied in some natural situations as the~examples below show.

\vspace*{2mm}\noindent
{\bf (a) }Assume $Y$ has the~minimal element. For each $z\in Y$ define $\NN(z)=\{y\in Y\colon y\leq z\}$ which is nothing else but the~ideal generated by $\{z\}$. It is easily verified that the~function $Y\ni z\mapsto\NN(z)$ satisfies all the~axioms (i)--(iv).

\vspace*{2mm}\noindent
{\bf (b) }For any $n\in\N$, consider the lattice $\La_n$ consisting of all natural divisors of $n$, that is, $(\La_n,\vee,\wedge)$ is given as $\La_n=\{k\in\N\colon k\mid n\}$, where $\vee$ is the least common multiple and $\wedge$ is the greatest common divisor. For $z\in\La_n$ define $\NN(z)$ as the~family of those elements of $\La_n$ whose each prime factor is also a~prime factor of $z$. Again, the~function $\La_n\ni z\mapsto\NN(z)$ satifies (i)--(iv).

\vspace*{2mm}\noindent
{\bf (c) }We can repeat the same idea as above for every finite distributive lattice $Y$. Let $H\colon Y\to\La_n$ be a~one-to-one homomorphism, for a~suitable $n\in\N$ (see, {\it e.g.}, \cite[Ch.~II.1]{gratzer}), and for every $z\in Y$ define $\NN(z)$ as the~family of those $y\in Y$ for which we have $H(y)\in\NN(H(z))$ in the~sense of the~definition from the~previous example.

\vspace*{2mm}\noindent
{\bf (d) }Let $\Theta\subset Y^2$ be a~congruence relation (see \cite[Ch.~I.3]{gratzer}) and assume it is {\it conditionally complete} in the~sense that if $y_i\equiv z_i (\Theta)$ for $i\in I$, then both $\{y_i\colon i\in I\}$ and $\{z_i\colon i\in I\}$ are bounded, and we have $\bigvee_{i\in I}y_i\equiv\bigvee_{i\in I}z_i(\Theta)$ and $\bigwedge_{i\in I}y_i\equiv\bigwedge_{i\in I}z_i(\Theta)$. For $z\in Y$ define $\NN(z)$ to be the~abstraction class determined by $z$, $\NN(z)=\{y\in Y\colon y\equiv z(\Theta)\}$. Conditions (i)--(iv) are then satisfied. Indeed, (i) is trivial; (ii) is valid since every abstraction class forms a~convex sublattice (see \cite[Lemma~I.3.7]{gratzer}); (iii) follows from the~conditional completeness of $\Theta$; (iv) follows from the~fact that $\Theta$, as every congruence, preserves the~join operation.
\end{example}

\section{Approximate monotonicity} 
In this section we will show how our theorem can be applied to obtain 
a~stability result for approximately monotone functions. First, note that obviously for any $D\subset\R$ a~function $f\colon D\to \R$ is increasing if and only if $\max\{f(x),f(y)\}=f(\max\{x,y\})$ for all $x,y\in D$.
\begin{cor}\label{cor21} 
Let $D\subset\R$, $\e\geq 0$ and assume that a~function $f\colon D\to\R$ satisfies
\begin{equation}\label{apprinc}
\max\{f(x),f(y)\}-f(\max\{x,y\})\leq\e\quad\mbox{for }x,y\in D.
\end{equation}
Then there exists an~increasing function $g\colon D\to \R$ such that $\abs{f(x)-g(x)}\leq\e/2$ for every $x\in D$.
\end{cor}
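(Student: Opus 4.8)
The plan is to realize Corollary~\ref{cor21} as a direct application of Theorem~\ref{main} (or rather its $\wedge$–$\vee$ interchanged analogue), taking $X=Y=\R$ with the usual linear order, which is a distributive, conditionally complete lattice in which the dual infinite distributive law holds trivially (in a linear order, $y\vee\inf S=\inf\{y\vee s\colon s\in S\}$ is immediate). The map $f$ is the given one; we must produce error functions $\phi,\psi$ so that the hypotheses \eqref{11}, \eqref{4}, \eqref{2} hold and so that the resulting join homomorphism $F$ (which, in a linear order, is exactly an increasing function, as noted before the statement) satisfies $\abs{f-F}\leq\e/2$. The natural choice is the \emph{constant} error functions $\phi(x,y)\equiv -\infty$-like behavior is not available, so instead I would shift: set $\phi(x,y)=f(\max\{x,y\})-\e$ and $\psi(x,y)=f(\max\{x,y\})+\e$? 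That makes \eqref{11}/\eqref{4} fail in general. The cleaner route is to first symmetrize.

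First I would pass from $f$ to $\widetilde f:=f-\tfrac{\e}{2}$ on one side; equivalently, observe that \eqref{apprinc} says $\max\{f(x),f(y)\}-\tfrac{\e}{2}\leq f(x\vee y)+\tfrac{\e}{2}$, i.e. writing $x\vee y=\max\{x,y\}$,
\[
\max\{f(x),f(y)\}-\tfrac{\e}{2}\;\leq\; f(x\vee y)+\tfrac{\e}{2}.
\]
Since also trivially $f(x\vee y)\leq\max\{f(x),f(y)\}$ (as $x\vee y\in\{x,y\}$), we get, after shifting $h:=f+\tfrac{\e}{2}$ on the lower bound, the two-sided sandwich
\[
f(x)\vee f(y)-\e\;\leq\; f(x\vee y)\;\leq\; f(x)\vee f(y).
\]
Now I would apply the $\vee$-version of Theorem~\ref{main} with the \emph{constant} functions $\phi(x,y)\equiv\e$ and $\psi(x,y)\equiv\e$ is still wrong dimensionally; the correct reading is to absorb the constant into the inequality \eqref{2} directly. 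Concretely: in Theorem~\ref{main} put $\phi(x,y)=f(x\vee y)-\e$ and $\psi(x,y)=f(x\vee y)+\e$ is what \eqref{2} wants, but \eqref{11}/\eqref{4} then require $f(z)-\e\leq f(x\vee y)-\e$ for $x,y\leq z$ — false. So the honest application uses error functions that are \emph{constants}: take $\phi\equiv c_0$ and $\psi\equiv c_1$ with $\phi(x,y)\wedge f(x\vee y)=\min\{c_0,f(x\vee y)\}$, which is not what we want either. The resolution — and I expect this to be the only real subtlety — is that Theorem~\ref{main} as stated uses $\wedge,\vee$ to combine the error term with $f$, whereas for the monotonicity application we want $\pm$; the fix is to apply Theorem~\ref{main} \emph{after} a change of variable that turns additive shifts into lattice operations is impossible in $\R$, so instead one applies Theorem~\ref{main2} with the neighborhood system $\NN(y)=[y-\e,y+\e]$.

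So the clean plan is: verify that $\NN(y):=[y-\tfrac{\e}{?},y+\tfrac{\e}{?}]$ — actually $\NN(y):=[y-\e,y+\e]$ — on $Y=\R$ satisfies (i)–(iv) of Theorem~\ref{main2}: (i) and (ii) and (iii) are obvious for closed intervals; (iv) says $t\in[u-\e,u+\e]$ and $\max\{u,y\}\in[z-\e,z+\e]$ imply $\max\{t,y\}\in[z-\e,z+\e]$, which follows because $x\mapsto\max\{x,y\}$ is $1$-Lipschitz and monotone, so $\abs{\max\{t,y\}-\max\{u,y\}}\leq\abs{t-u}\leq\e$ — wait, this gives a neighborhood of radius $\e$ around $\max\{u,y\}$, not around $z$; but since $\max\{u,y\}$ is within $\e$ of $z$ we'd get radius $2\e$. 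Hence I would instead run Theorem~\ref{main2} with the \emph{asymmetric} neighborhood $\NN(y):=[y-\e,\,y]$ coming from the one-sided inequality $f(x)\vee f(y)\in[f(x\vee y)-\e,\,f(x\vee y)]$ established above (this is exactly the hypothesis \eqref{bcnd}), check (i)–(iv) for these down-sets-of-width-$\e$ (for (iv): $t\in[u-\e,u]$, $\max\{u,y\}\in[z-\e,z]$ $\Rightarrow$ $\max\{t,y\}\in[z-\e,z]$, using $u-\e\leq t\leq u$ and $1$-Lipschitz monotonicity of $\max(\cdot,y)$, which does close up since $\max\{t,y\}\leq\max\{u,y\}\leq z$ and $\max\{t,y\}\geq\max\{u-\e,y\}\geq\max\{u,y\}-\e\geq z-\e$), obtain a join homomorphism — equivalently an increasing function — $F\colon\R\to\R$ with $F(x)\in[f(x)-\e,f(x)]$ for all $x$, and finally set $g:=F+\tfrac{\e}{2}$, which is increasing and satisfies $\abs{f-g}\leq\tfrac{\e}{2}$. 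Restricting everything to $D$ (or extending $f$ to $\R$ arbitrarily and restricting $g$ back) gives the statement. The main obstacle, as the discussion above shows, is choosing the neighborhood system with the right width and handedness so that axiom (iv) closes without the radius doubling; once the one-sided interval $[y-\e,y]$ is identified, everything else is routine.
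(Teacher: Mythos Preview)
Your final plan is to apply Theorem~\ref{main2} with the one-sided neighborhood $\NN(y)=[y-\e,y]$, but this route does not go through, for two reasons.

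First, the handedness is backwards. From the hypothesis $\max\{f(x),f(y)\}\leq f(x\vee y)+\e$ together with the trivial observation $f(x\vee y)\leq f(x)\vee f(y)$ (valid because $x\vee y\in\{x,y\}$ in a~chain) you actually obtain $f(x)\vee f(y)\in[f(x\vee y),\,f(x\vee y)+\e]$, not $[f(x\vee y)-\e,\,f(x\vee y)]$. So the candidate neighborhood should be $\NN(y)=[y,y+\e]$.

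Second, and more seriously, axiom~(iv) fails for either one-sided interval, so the radius-doubling you were trying to avoid is unavoidable here. In your own verification the last step ``$\max\{u,y\}-\e\geq z-\e$'' would require $\max\{u,y\}\geq z$, whereas the hypothesis only gives $\max\{u,y\}\geq z-\e$. A concrete counterexample for $\NN(y)=[y-\e,y]$ with $\e=1$: take $u=-1$, $t=-2$, $y=-5$, $z=0$; then $t\in[u-1,u]$ and $u\vee y=-1\in[z-1,z]$, but $t\vee y=-2\notin[z-1,z]$. The symmetric computation (take $u=1$, $t=2$, $y=0$, $z=0$) shows (iv) fails for $\NN(y)=[y,y+\e]$ as well. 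In fact any fixed-width interval system on $\R$ violates (iv), so Theorem~\ref{main2} is simply not the right tool for this corollary.

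The paper proceeds via Theorem~\ref{main} instead, which you abandoned too early. The point you missed is that $\phi,\psi$ need not be constants: one takes $Y=[-\infty,\infty]$, $\phi\equiv-\infty$ (making the left half of \eqref{2} and condition \eqref{11} vacuous) and the \emph{non-constant} $\psi(x,y)=\sup\{f(z):z\leq x\}\vee\sup\{f(z):z\leq y\}$, which is monotone in the sense of \eqref{4} by construction and makes the right half of \eqref{2} hold for trivial reasons. The bound \eqref{7} then gives $F(x)\leq f(x)\vee\psi(x,x)=\sup\{f(z):z\leq x\}\leq f(x)+\e$ using \eqref{apprinc}. For the missing lower bound $F(x)\geq f(x)$ the paper looks inside the construction of $F$ in Lemma~\ref{sep-thm} and uses that, in a~chain, any decomposition $x=x_1\vee\ldots\vee x_n$ has $x=x_i$ for some $i$, whence $\Phi(x)\geq f(x)$ and so $F(x)\geq f(x)$. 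Setting $g=F-\e/2$ finishes.
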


\begin{proof} 
We consider the lattices $X=D$ and $Y=[-\infty,\infty]$ in which the~join and the~meet operations are defined by $x\vee y=\max\{x,y\}$ and $x\wedge y=\min\{x,y\}$. Obviously, $X$ and $Y$ are distributive, $Y$ is also conditionally complete and satisfies the~dual to the~infinite distributive law.

Define $\phi, \psi\colon X\times X\to Y$ by 
$$\phi(x,y)=-\infty\quad\mbox{and}\quad \psi(x,y)=\sup\{f(z)\colon z\leq x\}\vee \sup\{f(z)\colon z\leq y\}.
$$
Inequality \eqref{11} is then obvious. Note that for all $x,y,z\in X$ with $x\leq z$ and $y\leq z$ we have
$$
\psi(x,y)\leq\sup\{f(u)\colon u\leq z\} =\psi(z,z)
$$
which means that inequality \eqref{4} is also satisfied.

Now, we shall verify that inequalities \eqref{2} are satisfied. The left one is obvious. For the right one, fix any $x,y\in X$ and assume with no loss of generality that $y\leq x$. Then 
$$
f(x)\leq f(x)\vee\psi(x,y)=f(x\vee y)\vee\psi(x,y)
$$ 
and
$$
f(y)\leq \sup\{f(z)\colon z\leq x\}\leq \psi(x,y)\leq f(x\vee y)\vee \psi(x,y),
$$ 
as desired.

By Theorem \ref{main}, there is a~join homomorphism ({\it i.e.} an~increasing function) $F\colon X\to Y$ which satisfies condition \eqref{7}. Recall that the~map $F$ was defined by the~formula $F(x)=\sup\{\Phi(z)\colon z\leq x\}$, where
$$
\Phi(x)=\inf\bigl\{f(x_1)\vee\ldots\vee f(x_n)\colon n\in\N,\,  x_1,\ldots,x_n\in X,\, x=x_1\vee\ldots\vee x_n\bigr\}
$$
({\it cf.} the proofs of Lemma~\ref{sep-thm} and Theorem~\ref{main}). From \eqref{7} and \eqref{apprinc} we get
$$
F(x)\leq f(x)\vee \psi(x,x)=f(x)\vee \sup\{f(z)\colon z\leq x\} \leq f(x)+\e\quad\mbox{for }x\in X.
$$
Notice that if $x=x_1\vee \ldots\vee x_n$ for some $x_1,\ldots,x_n\in X$, then $x=x_i$ for some $i\in\{1,\ldots,n\}$. Hence, by the~very definition, we have $\Phi(x)\geq f(x)$ and therefore $F(x)\geq\Phi(x)\geq f(x)$ for each $x\in X$. We have shown that $F$ is an~increasing, real-valued function such that $f(x)\leq F(x)\leq f(x)+\e$ for each $x\in X$. It remains to define $g\colon X\to Y$ as $g(x)=F(x)-\e/2$.
\end{proof}

Observe that interchanging $\vee$ and $\wedge$ in the above proof we obtain an~analogous result on approximately decreasing functions.
\begin{cor} \label{cor22}
Let $D\subset\R$, $\e\geq 0$ and assume that a~function $f\colon D\to\R$ satisfies
\begin{equation}\label{apprinc}
f(\max\{x,y\})-\min\{f(x),f(y)\}\leq\e\quad\mbox{for }x,y\in D.
\end{equation}
Then there exists a~decreasing function $g\colon D\to \R$ such that $\abs{f(x)-g(x)}\leq\e/2$ for every $x\in D$.
\end{cor}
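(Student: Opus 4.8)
The plan is to deduce Corollary~\ref{cor22} from Corollary~\ref{cor21} by the obvious order-reversal in the target, which is the concrete form of the duality noted in the remark preceding the statement.

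First I would put $h=-f\colon D\to\R$. A one-line computation shows that $h$ satisfies the hypothesis of Corollary~\ref{cor21}: since $\max\{h(x),h(y)\}=-\min\{f(x),f(y)\}$ and $h(\max\{x,y\})=-f(\max\{x,y\})$, we get $\max\{h(x),h(y)\}-h(\max\{x,y\})=f(\max\{x,y\})-\min\{f(x),f(y)\}\leq\e$. Applying Corollary~\ref{cor21} to $h$ yields an increasing function $\tilde g\colon D\to\R$ with $\abs{h(x)-\tilde g(x)}\leq\e/2$ for all $x\in D$. Then $g=-\tilde g$ is decreasing and $\abs{f(x)-g(x)}=\abs{\tilde g(x)-h(x)}\leq\e/2$, which finishes the proof.

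If instead one wants the proof that literally proceeds by ``interchanging $\vee$ and $\wedge$'', one keeps $X=D$ with $\vee=\max$ but works with the version of Theorem~\ref{main} obtained by swapping $\vee$ and $\wedge$ in $Y=[-\infty,\infty]$ (legitimate, since a complete chain satisfies both the infinite distributive law and its dual). Here one takes $\phi\equiv+\infty$ and $\psi(x,y)=\min\{\inf\{f(z)\colon z\leq x\},\inf\{f(z)\colon z\leq y\}\}$, checks the mirror images of \eqref{11}, \eqref{4}, \eqref{2} (with $\sup$ replaced by $\inf$ and each $\leq$ between values in $Y$ reversed), and extracts a map $F$ with $F(\max\{x,y\})=\min\{F(x),F(y)\}$, i.e.\ a decreasing function, together with the estimate dual to \eqref{7}. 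Taking $y=z$ in the hypothesis gives $f(z)\geq f(x)-\e$ whenever $z\leq x$, so that estimate collapses to $f(x)-\e\leq F(x)\leq f(x)$, and $g=F+\e/2$ works.

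I do not expect any real obstacle: all the substance is already contained in Corollary~\ref{cor21} (hence in Lemma~\ref{sep-thm} and Theorem~\ref{main}). In the first approach there is nothing to verify beyond the displayed one-line identity; in the second, the only point needing a little care is the bookkeeping of the duality --- checking that a ``join homomorphism into the order-reversed $Y$'' really is a decreasing function (if $x\leq y$ then $F(y)=F(\max\{x,y\})=\min\{F(x),F(y)\}$, so $F(y)\leq F(x)$), and that reversing the order on $[-\infty,\infty]$ preserves conditional completeness and the distributivity hypotheses of Theorem~\ref{main}.
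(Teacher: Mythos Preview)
Your proposal is correct and matches the paper's own treatment: the paper does not spell out a separate argument but simply says ``interchanging $\vee$ and $\wedge$ in the above proof'' of Corollary~\ref{cor21}, which is exactly your second route; your first route via $h=-f$ is just the concrete realization of that same order-reversal on $Y=[-\infty,\infty]$ and is arguably the cleanest way to record it.
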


Finally, notice that Theorem~\ref{Pal} can be easily derived from Corollaries~\ref{cor21} and \ref{cor22}, as well as these two corollaries can be derived from Theorem~\ref{Pal} (in the~case where $D$ is an~interval).

\proof[Acknowledgement] The research of the third-named author is a part of the~{\it Iterative functional equations and real analysis} program (Institute of Ma\-the\-ma\-tics, University of Silesia, Katowice, Poland).

\end{document}